\theoremstyle{plain}
\newtheorem{prop}{Proposition}[section]
\newtheorem*{mainthm}{Main theorem}
\newtheorem{cor}[prop]{Corollary}
\newtheorem{lemma}[prop]{Lemma}
\theoremstyle{definition}
\newtheorem{defn}[prop]{Definition}
\theoremstyle{remark}
\newtheorem{remark}[prop]{Remark}
\newcommand{\defin}{\textbf}
\newcommand{\dist}{\operatorname{dist}}
\newcommand{\dR}{{\operatorname{dR}}}
\newcommand{\End}{\operatorname{End}}
\newcommand{\ev}{\operatorname{ev}}
\newcommand{\GW}{\operatorname{GW}}
\newcommand{\Hom}{\operatorname{Hom}}
\newcommand{\LD}{L^{\otimes D}}
\newcommand{\mmax}{m_{{\operatorname{max}}}}
\newcommand{\PD}{\operatorname{PD}}
\newcommand{\pr}{\operatorname{pr}}
\newcommand{\pt}{\operatorname{pt}}
\newcommand{\st}{\operatorname{st}}
\newcommand{\Surj}{\operatorname{Surj}}
\newcommand{\CC}{{\mathbb C}}
\newcommand{\NN}{{\mathbb N}}
\newcommand{\QQ}{{\mathbb Q}}
\newcommand{\RR}{{\mathbb R}}
\newcommand{\TT}{{\mathbb T}}
\newcommand{\ZZ}{{\mathbb Z}}
\newcommand{\jJ}{{\mathcal J}}
\newcommand{\mM}{{\mathcal M}}
\newcommand{\uU}{{\mathcal U}}
\newcommand{\p}{\partial}
\renewcommand{\dbar}{\bar{\partial}}
\numberwithin{equation}{section}
\definecolor{blue}{rgb}{0,0,1}
\definecolor{red}{rgb}{1,0,0}
\definecolor{green}{rgb}{0,.7,0}
\title[Contact Hypersurfaces in Uniruled Manifolds Separate]{Contact Hypersurfaces in Uniruled Symplectic Manifolds Always Separate}
\author{Chris Wendl}
\address{Department of Mathematics \\ 
University College London \\
Gower Street \\
London WC1E 6BT \\ 
United Kingdom}
\email{c.wendl@ucl.ac.uk}
\thanks{Research supported by a Royal Society University Research Fellowship.}
\subjclass[2010]{Primary 57R17; Secondary 53D45, 53D35}
\begin{document}

\begin{abstract}
We observe that nonzero Gromov-Witten invariants with marked point 
constraints in a
closed symplectic manifold imply restrictions on the homology classes that
can be represented by contact hypersurfaces.  As a special case,
contact hypersurfaces must always separate if the symplectic manifold is
uniruled.  This removes a superfluous assumption in a result of
G.~Lu \cite{Lu:uniruled}, thus implying that all contact manifolds that
embed as contact type hypersurfaces into
uniruled symplectic manifolds satisfy the Weinstein conjecture.
We prove the main result using the Cieliebak-Mohnke approach to
defining Gromov-Witten invariants via Donaldson hypersurfaces, 
thus no semipositivity or virtual moduli cycles are required.
\end{abstract}

\maketitle

\tableofcontents

\section{The statement}
\label{sec:statement}

\subsection{Main result and consequences}
\label{sec:main}

In this note, we prove the following.

\begin{mainthm}
Suppose $(M,\omega)$ is a closed symplectic manifold and
$V \subset M$ is a real hypersurface that is pseudoconvex for some choice
of $\omega$-compatible almost complex structure on~$M$.
Then the rational Gromov-Witten invariants of $(M,\omega)$, defined
in the sense of \cite{CieliebakMohnke:transversality}
(see \S\ref{sec:semipositive} and \S\ref{sec:CM}), satisfy
$$
\GW^{(M,\omega)}_{0,m,A}(\PD[V] \cup \alpha_1,\alpha_2,\ldots,\alpha_m ; \beta)
= 0
$$
for all $m \ge 3$, $A \in H_2(M)$, $\alpha_1,\ldots,\alpha_m \in H^*(M ; \QQ)$
and $\beta \in H_*(\overline{\mM}_{0,m} ; \QQ)$.
\end{mainthm}

Recall that a real hypersurface $V$ in an almost complex manifold
$(M,J)$ is \defin{pseudoconvex} (also sometimes called \defin{$J$-convex})
if the maximal $J$-invariant subbundle $\xi \subset TV$ is a contact structure
whose canonical conformal class of symplectic structures tames~$J|_{\xi}$.
As an important special case, when $(M,\omega)$ is a symplectic manifold,
we say $V \subset M$ is a \defin{contact type hypersurface} if
$\omega$ can be written in a neighborhood of $V$ as $d\lambda$ for some
$1$-form $\lambda$ whose restriction to $V$ is a contact form.  In that
case, $V$ is $J$-convex for any choice of $\omega$-tame almost complex
structure $J$ that preserves the contact structure on~$V$, and without loss of
generality one can also arrange $J$ to be $\omega$-compatible.

We will show in \S\ref{sec:GW} below that the main theorem has the 
following immediate consequence:

\begin{cor}
\label{cor:uniruled}
Suppose $(M,\omega)$ is a closed symplectic manifold that is
symplectically uniruled (see Definition~\ref{defn:uniruled}). 
Then every contact type hypersurface in $(M,\omega)$ is separating.
\end{cor}

Some motivation to prove such a result comes from the Weinstein conjecture, 
which asserts that any closed contact type hypersurface in a symplectic 
manifold has a closed orbit of its characteristic line field.  There is 
a long history of results that prove this conjecture under various assumptions 
on the existence of holomorphic curves in the ambient symplectic manifold, 
cf.~\cites{HoferViterbo:spheres,LiuTian,Lu:uniruled}.  However, such results 
have often been proved only for \emph{separating} contact hypersurfaces, 
leaving the question without this extra assumption open.  Our theorem 
thus shows that the extra assumption is superfluous, e.g.~combining it 
with Guangcun Lu's result, we obtain:

\begin{cor}[via \cite{Lu:uniruled}]
\label{cor:Weinstein}
If $(V,\xi)$ is a contact manifold that embeds into
a symplectically uniruled symplectic manifold as a contact type hypersurface, 
then every contact form for $(V,\xi)$ admits a periodic Reeb orbit,
i.e.~the Weinstein conjecture holds for~$(V,\xi)$.
\end{cor}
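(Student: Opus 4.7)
The plan is to combine the preceding corollary with Lu's theorem from~\cite{Lu:uniruled}, which establishes the Weinstein conjecture for \emph{separating} contact type hypersurfaces in symplectically uniruled symplectic manifolds. The preceding corollary rules out nonseparating weakly contact hypersurfaces in such ambient manifolds, so Lu's separating hypothesis becomes automatic and can simply be dropped.

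The first step I would take is to confirm that a contact type hypersurface in the standard sense is weakly contact, so that the preceding corollary applies. This is immediate from the definitions: if a contact form $\lambda$ on $V$ extends to a primitive of $\omega$ on a collar, then $\omega|_\xi = d\lambda|_\xi$ is itself a representative of the canonical conformally symplectic class $\CS_\xi$, hence $\omega|_\xi + \CS_\xi$ is a ray of symplectic structures on $\xi \to V$. The second step is to invoke the preceding corollary to deduce that the image of $V$ separates the ambient uniruled manifold $(M,\omega)$. The third and final step is to apply Lu's theorem to this separating contact type hypersurface, producing a closed Reeb orbit for the prescribed contact form on $(V,\xi)$.

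The only genuine obstacle — modest at that — is the bookkeeping needed to verify that the conventions for \emph{uniruled} and for semipositivity used in~\cite{Lu:uniruled} match those adopted here, so that the two results concatenate without hidden hypotheses. Once this compatibility is checked, no further analytic or topological input is required beyond what the main theorem and Lu's work already provide.
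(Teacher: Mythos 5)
Your proposal is correct and is exactly the argument the paper intends: contact type implies weakly contact (as noted in Section~\ref{sec:statement}), the preceding corollary then forces the hypersurface to separate, and Lu's theorem applies to separating contact type hypersurfaces. No further comment is needed.
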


For more on symplectic manifolds to which this result applies, see 
\cite{Hyvrier:uniruled} and the references therein.

\begin{remark}
Our use of the technique of Cieliebak and Mohnke
\cite{CieliebakMohnke:transversality} for defining the Gromov-Witten
invariants via Donaldson hypersurfaces imposes certain technical
restrictions on the scope of the above results:
(1)~The setup in \cite{CieliebakMohnke:transversality} only handles
symplectic manifolds with integral cohomology, 
i.e.~$[\omega] \in H^2(M;\ZZ)$, due to the need for a 
symplectic hypersurface Poincar\'e dual to a large multiple of~$[\omega]$.
One can obviously generalize this to the assumption that $[\omega]$ is
any real multiple of an integral class, and of course every symplectic form
admits a small perturbation that has this property.
It is likely moreover that the restriction to integral classes 
can be lifted entirely by choosing symplectic hypersurfaces that
approximate the relevant homology classes, and indeed, the recent preprint
of Ionel and Parker \cite{IonelParker:virtual} claims to define
fully deformation-invariant Gromov-Witten invariants for arbitrary
$[\omega] \in H^2_\dR(M)$ using similar techniques.  For simplicity, we shall
nonetheless assume wherever necessary that $[\omega]$ is integral, in order
to remain fully consistent with \cite{CieliebakMohnke:transversality}.
(2)~Following \cite{MassotNiederkruegerWendl}, one can define a real 
hypersurface
$V$ in a symplectic manifold $(M,\omega)$ to be \defin{weakly contact} if
there exists an $\omega$-tame almost complex structure $J$ for which
$V$ is $J$-convex. This is equivalent to the condition required in our main
theorem if $\dim V = 3$, but in higher dimensions it appears to be more
general.  It is very likely that our main theorem holds under this weaker
assumption as well, and the proof given here will imply this at least in the 
semipositive case without coupling to gravity (using the standard setup
from \cite{McDuffSalamon:Jhol}).  A more general proof will probably be
possible in the future using polyfolds 
(cf.~Remark~\ref{remark:polyfolds}).  
In the non-semipositive case, our reliance on the Donaldson hypersurface
construction \cite{Donaldson:hypersurface} necessitates the added restriction 
that $J$ is \emph{compatible} with $\omega$, not just tamed.
\end{remark}

\subsection{Recollections on Gromov-Witten theory}
\label{sec:GW}

In this article, we regard the \emph{Gromov-Witten invariants} of a symplectic 
manifold $(M,\omega)$ as an association to each pair of integers 
$g,m \ge 0$ with $2g + m \ge 3$ and each homology class $A \in H_2(M)$ of a homomorphism
\begin{equation}
\label{eqn:GW}
\GW^{(M,\omega)}_{g,m,A} : H^*(M; \QQ)^{\otimes m} \otimes H_*( \overline{\mM}_{g,m} ; \QQ) \to \QQ,
\end{equation}
where $\overline{\mM}_{g,m}$ denotes the Deligne-Mumford compactification
of the moduli space of Riemann surfaces with genus~$g$ and $m$~marked points.
Let
$$
\PD : H_*(M;\QQ) \to H^*(M;\QQ)
$$
denote the Poincar\'e duality isomorphism, or its inverse when convenient.
In the absence of transversality problems, 
$\GW^{(M,\omega)}_{g,m,A}(\alpha_1,\ldots,\alpha_m ; \beta)$
is interpreted as a count of rigid unparametrized
$J$-holomorphic curves of genus~$g$, 
for a generic $\omega$-tame almost complex structure~$J$, with $m$ marked
points such that for $i=1,\ldots,m$, the $i$th marked point is mapped
to a generic smooth representative of $\PD(\alpha_i) \in H_*(M)$, 
and the underlying conformal structure of the 
domain lies in a generic smooth representative of $\beta \in 
H_*(\overline{\mM}_{g,m})$.
In practice, the transversality problems that
arise in this definition require considerable effort to overcome,
and the literature contains various approaches
(e.g.\ \cites{FukayaOno,LiTian,Ruan:virtual,
Siebert:GW,CieliebakMohnke:transversality,HWZ:GW}) which may or may not
all define the same invariants.  

In order to be concrete and also minimize
the technical apparatus needed, in this paper we shall work with the
definition provided by Cieliebak and Mohnke 
\cite{CieliebakMohnke:transversality} for the $g=0$ case, 
which uses a \emph{Donaldson
hypersurface} as auxiliary data and thus requires the symplectic form to
represent an integral cohomology class.  The essential details of this setup
will be reviewed in \S\ref{sec:CM}, though we shall also attempt to
express the main argument in terms that do not depend on these details.
In particular, the reader who would prefer to avoid serious technical issues
by assuming $(M,\omega)$ is semipositive may do so by skipping
from \S\ref{sec:semipositive} (where we review the main definitions in the
semipositive case) straight to \S\ref{sec:proof}.  In either case, the theory 
is defined essentially by constructing a
suitably compactified moduli space $\overline{\mM}_{0,m}^A(M,J)$ of 
stable nodal pseudoholomorphic spheres homologous to $A$, with $m$ marked points, 
such that the natural \emph{evaluation/forgetful map}
\begin{equation}
\label{eqn:smallpseudocycle}
(\ev,\Phi) = (\ev_1,\ldots,\ev_m,\Phi) : \mM_{0,m}^A(M,J) \to M^m \times
\overline{\mM}_{0,m}
\end{equation}
defines a \emph{rational pseudocycle} in the sense of 
\cite{McDuffSalamon:Jhol}*{\S 6.5}, meaning that rational intersection numbers
with homology classes in $M^m \times \overline{\mM}_{0,m}$ can be defined.
The homomorphism \eqref{eqn:GW} is then defined, up to a combinatorial
constant (see \eqref{eqn:GWgeneral}), by
\begin{equation}
\label{eqn:GWdefn}
\GW^{(M,\omega)}_{0,m,A}(\alpha_1,\ldots,\alpha_m;\beta) = 
[(\ev,\Phi)] \cdot \left( \PD(\alpha_1) \times \ldots \times
\PD(\alpha_m) \times \beta \right).
\end{equation}

\begin{remark}
\label{remark:gravity}
The Gromov-Witten invariants defined in \cite{CieliebakMohnke:transversality} do
not involve ``coupling to gravity,'' i.e.~they rely on the fact that
$\ev : \mM_{0,m}^A(M,J) \to M^m$ is a pseudocycle, but do not deal at all with the
forgetful map $\Phi : \mM_{0,m}^A(M,J) \to \overline{\mM}_{0,m}$, associating to a 
$J$-holomorphic curve its underlying conformal structure.  It is nonetheless true
in the context of \cite{CieliebakMohnke:transversality} that $(\ev,\Phi)$
is a pseudocycle and hence \eqref{eqn:GWdefn} is well defined; the proof of
this fact is almost already implicit in that paper, and we shall spell out the
missing ingredients in Appendix~\ref{sec:forgetful}.  Note that in the semipositive case, the
standard approach via domain-dependent almost complex structures suffices to
prove that the evaluation map is a pseudocycle, but not the forgetful 
map---see \cite{McDuffSalamon:Jhol}*{pp.~184--186}.  Thus the simplified
version of our arguments (avoiding Donaldson hypersurfaces) for the semipositive
case will be valid only for the simplified invariants
$\GW^{(M,\omega)}_{0,m,A} : H^*(M; \QQ)^{\otimes m} \to \ZZ$,
which match \eqref{eqn:GW} if $\beta$ is defined as the fundamental
class of $\overline{\mM}_{0,m}$.
\end{remark}

We now recall the following standard definition.

\begin{defn}\label{defn:uniruled}
A closed symplectic manifold $(M,\omega)$ is said to be
\defin{symplectically uniruled} if it has a nonzero rational Gromov-Witten
invariant with at least one pointwise constraint, i.e.~there exist
$A \in H_2(M)$, an integer $m \ge 3$ and classes
$\alpha_2,\ldots,\alpha_m \in H^*(M ; \QQ)$, 
$\beta \in H_*(\overline{\mM}_{0,m} ; \QQ)$ such that
\begin{equation}
\label{eqn:uniruled}
\GW^{(M,\omega)}_{0,m,A}(\PD[\pt],\alpha_2,\ldots,\alpha_m;\beta) \ne 0,
\end{equation}
where $[\pt] \in H_0(M)$ denotes the homology class of a point.
\end{defn}
Morally, being symplectically uniruled means one can find a set of constraints 
so that there is always a nonzero count of constrained holomorphic spheres 
passing through a generic point.

\begin{proof}[Proof of Corollary~\ref{cor:uniruled}]
If $V \subset M$ is a nonseparating hypersurface, then
$[V] \ne 0 \in H_*(M;\QQ)$ and one can therefore find a cohomology class
$\alpha_1 \in H^*(M;\QQ)$ with $\langle \alpha_1,[V] \rangle = 1$.
Hence
$$
\PD[V] \cup \alpha_1 = \PD[\pt].
$$
Now if $V$ is also pseudoconvex for some compatible almost complex structure,
then the main theorem implies that \eqref{eqn:uniruled} cannot be
satisfied for any choices $\alpha_2,\ldots,\alpha_m,\beta$, hence
$(M,\omega)$ is not uniruled.
\end{proof}

\begin{remark}
\label{remark:polyfolds}
An earlier version of the present paper made the optimistic claim that the 
arguments given here can be carried out using the polyfold theory of 
Hofer-Wysocki-Zehnder \cite{HWZ:GW}.  While that is
probably true, subsequent discussions with Hofer have led to the conclusion
that it is not fully provable using the technology in its present state: in
particular, homological intersection theory and Poincar\'e duality are
not currently well enough understood in the polyfold context to justify 
anything analogous to Equation~\eqref{eqn:Poincare}.  I would like to 
thank Joel Fish and Helmut Hofer for helping clarify this point.
\end{remark}

\subsection{Discussion}
\label{sec:discussion}

We now add a few more remarks on the context of the main theorem and its 
corollaries.

\subsubsection{Nonseparating hypersurfaces}

Nonseparating contact type hypersurfaces do exist in general, though they are
usually not easy to find.  A construction in dimension~$4$ was suggested by Etnyre 
and outlined in \cite{AlbersBramhamWendl}*{Example~1.3}: the idea is to start
from a symplectic filling with two boundary components, attach a Weinstein
$1$-handle to form the boundary connected sum and then attach a symplectic
cap to form a closed symplectic manifold, which contains both boundary
components of the original symplectic filling as nonseparating contact 
hypersurfaces.  At the time 
\cite{AlbersBramhamWendl} was written, examples of symplectic fillings with
disconnected boundary were known only up to dimension~6 (due to
McDuff \cite{McDuff:boundaries}, Geiges \cites{Geiges:disconnected,
Geiges:disconnected2} and Mitsumatsu \cite{Mitsumatsu:Anosov}), but
recently a construction in all dimensions appeared in work of the author
with Massot and Niederkr\"uger \cite{MassotNiederkruegerWendl}.
It seems likely that these examples can be combined with the symplectic capping
result of Lisca and Mati\'{c} \cite{LiscaMatic}*{Theorem~3.2} for Stein
fillable contact manifolds to construct examples of nonseparating contact
hypersurfaces in all dimensions, but we will not pursue this any further here.

Note that it is somewhat easier to find examples of \emph{weakly} contact hypersurfaces that
do not separate: for instance, considering the standard symplectic $\TT^4$ as a
product of two symplectic $2$-tori, for any nonseparating loop $\gamma \subset \TT^2$
the hypersurface $\gamma \times \TT^2 \subset \TT^4$ admits an obvious foliation
by symplectic $2$-tori, and this foliation can be perturbed to any of the
tight contact structures on~$\TT^3$ (cf.~\cite{Giroux:plusOuMoins}).  Notice that
one cannot use the same trick to produce a nonseparating weakly contact hypersurface
in $\TT^2 \times S^2$ with any product symplectic structure, as the latter is
uniruled.\footnote{Actually, the statement of our main theorem for $\TT^2 \times S^2$ 
can be proved by more elementary means without mentioning Gromov-Witten invariants, 
cf.~\cite{AlbersBramhamWendl}*{Theorem~1.15}.}  
This implies the well known fact (see~\cite{EliashbergThurston}) that the
obvious foliation by spheres on $S^1 \times S^2$ cannot be perturbed to a
contact structure.

\subsubsection{Higher genus}

The theorem of Lu \cite{Lu:uniruled} also establishes the Weinstein conjecture for
separating contact type hypersurfaces under the more general assumption
\begin{equation}
\label{eqn:GWgenus}
\GW^{(M,\omega)}_{g,m,A}(\PD([\pt]),\alpha_2,\ldots,\alpha_m ; \beta) \ne 0,
\end{equation}
i.e.~one need not assume $g = 0$.  In fact, using the more recent technology of
``stretching the neck'' \cite{SFTcompactness}, one can give a straightforward
alternative proof of Lu's result which also shows that any \emph{nonseparating}
contact hypersurface in a manifold satisfying \eqref{eqn:GWgenus} must have
a closed characteristic.\footnote{For this heuristic discussion we are ignoring
the usual analytical issues of how to define the higher genus Gromov-Witten 
invariants; definitions using the Donaldson hypersurface idea have appeared 
in recent work of Gerstenberger \cite{Gerstenberger:thesis} and
Ionel-Parker \cite{IonelParker:virtual}.}
Note however that in the genus zero case, this is
a weaker statement than Corollary~\ref{cor:Weinstein}: it asserts that a
\emph{particular} contact form on $(V,\xi) \subset (M,\omega)$ admits a
closed Reeb orbit, but not that this is true for every possible choice of contact form.
The obvious stretching argument does not appear to imply this stronger statement 
in general except when $V$ separates~$M$.

It seems unlikely moreover that our main result 
would hold under the more general assumption \eqref{eqn:GWgenus}---certainly 
the method of proof given below does not work, 
as it requires the fact that the relevant holomorphic curves in~$M$ can always be 
lifted to a cover (since $S^2$ is simply connected).  
However, it was pointed out
to me by Guangcun Lu that due to relations among Gromov-Witten invariants
(see \cite{Lu:pseudoCapacities}*{\S 7}),
certain conditions on higher genus invariants will imply that $(M,\omega)$ is also
uniruled, e.g.~this is the case whenever there is a nontrivial invariant of the form
$$
\GW^{(M,\omega)}_{g,m,A}(\PD([\pt]),\alpha_2,\ldots,\alpha_m ; [\pt]) \ne 0.
$$
The reason is that this invariant counts curves with a fixed conformal 
structure on the domain, so one can derive holomorphic spheres from them 
by degenerating the
conformal structure to ``pinch away'' the genus.

\begin{remark}
Note that in the above formulation of the Weinstein conjecture for closed 
contact hypersurfaces, the ambient symplectic manifold need not be closed, 
e.g.~every contact manifold is a contact hypersurface in its own (noncompact) 
symplectization.  As was shown in \cite{AlbersBramhamWendl}, there are many 
contact manifolds that do not admit any contact type embeddings into any closed 
symplectic manifold---as far as I am aware, all contact manifolds that are 
currently known to admit such embeddings are also symplectically fillable.
\end{remark}

\subsection{Acknowledgments}
I would like to thank Guangcun Lu for comments on a preliminary version of
this paper, Kai Cieliebak for feedback on the appendix, 
and Patrick Massot, Helmut Hofer, Joel Fish and
Jean-Paul Mohsen for useful conversations.
The question considered here was originally brought to my attention by a talk
of Cl\'ement Hyvrier about his paper \cite{Hyvrier:uniruled} at the
\textsl{Sixth Workshop on Symplectic Geometry, Contact Geometry and Interactions}
in Madrid, February 2--4, 2012, funded by the ESF's \textsl{CAST} programme.  
My approach to the proof owes a slight 
debt to an observation made by an anonymous referee for the paper \cite{AlbersBramhamWendl}.
Likewise, my understanding of Cieliebak-Mohnke transversality owes a
substantial debt to the CNRS-funded Summer School on Donaldson Hypersurfaces that took
place in La Llagonne, June 17--21, 2013.

\section{Some preparations}
\label{sec:preparations}

In this section, we shall review some crucial definitions, starting in
\S\ref{sec:pseudocycle} with the construction of the Gromov-Witten
pseudocycle in both the semipositive and general cases.
In \S\ref{sec:Mohsen}, we will also prove a simple result about Donaldson
hypersurfaces that is needed to carry out our application to contact
hypersurfaces in the non-semipositive case.

\subsection{Defining the Gromov-Witten pseudocycle}
\label{sec:pseudocycle}

We will now review the definitions of the moduli spaces that
determine the pseudocycle \eqref{eqn:smallpseudocycle}.  We begin with the
semipositive case in \S\ref{sec:semipositive} before addressing the
general case in \S\ref{sec:CM}.

\subsubsection{The semipositive case}
\label{sec:semipositive}

Recall that a closed $2n$-dimensional 
symplectic manifold $(M,\omega)$ is called
\defin{semipositive} if there are no spherical homology classes
$A \in \pi_2(M)$ satisfying
$$
\omega(A) > 0 \quad\text{ and } \quad 3 - n \le c_1(A) < 0.
$$
In particular, this is always satisfied if $n=2$ or~$3$.
Under this condition, one can define integer-valued Gromov-Witten 
invariants
$$
\GW^{(M,\omega)}_{0,m,A} : H^*(M; \QQ)^{\otimes m} \to \ZZ
$$
for any $m \ge 3$ and $A \in H_2(M)$ by the following prescription
explained in \cite{McDuffSalamon:Jhol}.  (The original construction of
these invariants is due to Ruan \cite{Ruan:GW}.)

Let $\jJ_\tau(M,\omega)$ denote the space of smooth $\omega$-tame almost
complex structures on~$M$, and define
$$
\jJ_{S^2} := \left\{ J \in \Gamma(\pr_2^*\End_\RR(TM))\ |\ 
\text{$J(z,\cdot) \in \jJ_\tau(M,\omega)$ for all $z \in S^2$} \right\},
$$
where $\pr_2 : S^2 \times M \to M$ denotes the projection.  We call
$\jJ_{S^2}$ the space of smooth $\omega$-tame \emph{domain-dependent}
almost complex structures (where the ``domain'' is $S^2$).
Given $J \in \jJ_{S^2}$, a smooth map $u : S^2 \to M$ is said to be
\defin{$J$-holomorphic} if for all $z \in S^2$,
\begin{equation}
\label{eqn:CR}
du(z) + J(z,u(z)) \circ du(z) \circ i = 0,
\end{equation}
where $i$ is the standard complex structure on~$S^2 = \CC \cup \{\infty\}$.
For any $m \ge 3$ and $A \in H_2(M)$, we can then define the moduli space
$$
\mM_{0,m}^A(M,J) = \left\{ (u, \mathbf{z}) \right\},
$$
where $u : S^2 \to M$ is a $J$-holomorphic map with $[u] = A$,
and $\mathbf{z} = (z_4,\ldots,z_m)$ is an ordered $(m-3)$-tuple of 
pairwise distinct points in
$S^2 \setminus \{0,1,\infty\}$.  Setting $(z_1,z_2,z_3) := (0,1,\infty)$,
the \defin{evaluation map} is then defined by
\begin{equation*}
\begin{split}
&\ev = (\ev_1,\ldots,\ev_m) : \mM_{0,m}^A(M,J) \to M^m, \\
&\ev_j(u,\mathbf{z}) = u(z_j) \quad \text{ for $j=1,\ldots,m$}.
\end{split}
\end{equation*}
The \defin{forgetful map} $\Phi : \mM_{0,m}^A(M,J) \to \mM_{0,m}$ is
likewise defined by associating to $(u,\mathbf{z})$ the equivalence
class of conformal structures on $S^2$ with $m$ marked points positioned
at $(0,1,\infty,z_4,\ldots,z_m)$.  Note that since we have fixed the 
positions of the first three marked points, there is no need to divide
out reparametrizations.

Under the semipositivity condition, 
one can show using standard index computations (see \cite{McDuffSalamon:Jhol})
that $\ev : \mM_{0,m}^A(M,J) \to M^m$ is a pseudocycle of dimension
$2(n-3) + 2 c_1(A) + 2m$ for generic choices of $J \in \jJ_{S^2}$, and
for such choices, the corresponding Gromov-Witten invariant 
(without coupling to gravity)
can be computed for $\alpha_1,\ldots,\alpha_m \in H^*(M;\ZZ)$ as
\begin{equation}
\label{eqn:GWsemipositive}
\GW^{(M,\omega)}_{0,m,A}(\alpha_1,\ldots,\alpha_m) = 
[\ev] \cdot \left( \PD(\alpha_1) \times \ldots \times \PD(\alpha_m) \right) \in \ZZ.
\end{equation}
As mentioned already in Remark~\ref{remark:gravity}, the forgetful map is
generally not a pseudocycle for this definition of the moduli space, and we
shall therefore ignore coupling to gravity in our discussion of the
semipositive case.

The genericity requirement in \eqref{eqn:GWsemipositive} implies that one cannot
generally assume $J$ to be domain-independent.  It will be important for
our application however that one can do the next best thing: fix any
$J_1 \in \jJ_\tau(M,\omega)$, which we shall refer to henceforward as the
\emph{reference} almost complex structure.  We can regard $J_1$ as an element
of $\jJ_{S^2}$ with constant dependence on $z \in S^2$, and the
tangent space at $J_1$ to the
Fr\'echet manifold $\jJ_{S^2}$ is then
\begin{equation*}
\begin{split}
T_{J_1}\jJ_{S^2} = \big\{ Y \in \Gamma(\pr_2^*\End_\RR(TM))\ |\ 
&\text{$Y(z,p) J_1(p) + J_1(p) Y(z,p) = 0$} \\
&\text{for all $(z,p) \in S^2 \times M$} \big\}.
\end{split}
\end{equation*}
After choosing a smooth family of metrics on the manifolds of
complex structures at points in~$M$, we can write any
$J \in \jJ_{S^2}$ in some $C^0$-small neighborhood of $J_1$ as
$J(z,p) = \exp_{J_1(p)} Y(z,p)$ for some $C^0$-small section
$Y \in T_{J_1} \jJ_{S^2}$.
Genericity then allows us to conclude the following:

\begin{lemma}
\label{lemma:sequenceSemipositive}
There exists a sequence $Y_k \in T_{J_1}\jJ_{S^2}$ converging to~$0$ in
$C^\infty$ such that \eqref{eqn:GWsemipositive} holds with the
Gromov-Witten pseudocycle $\ev : \mM_{0,m}^A(M,J) \to M^m$ defined 
for any $J = \exp_{J_1} Y_k$. \qed
\end{lemma}

\subsubsection{The Cieliebak-Mohnke approach}
\label{sec:CM}

We now consider $(M,\omega)$ to be an arbitrary closed $2n$-dimensional
symplectic manifold that satisfies $[\omega] \in H^2(M;\ZZ)$ but is 
not necessarily semipositive.  The purpose of this section is to
summarize the relevant details of the recipe from 
\cite{CieliebakMohnke:transversality} 
for defining the Gromov-Witten invariants.

As auxiliary data, we choose an $\omega$-compatible almost complex
structure $J_0$, and a so-called \emph{Donaldson hypersurface of degree
$D \in \NN$:}
$$
Z_D \subset (M,\omega) \text{ symplectic, such that }
\PD[Z_D] = D [\omega].
$$
The existence of $Z_D$ for large $D \gg 0$ is provided by a deep theorem of
Donaldson \cite{Donaldson:hypersurface}, and
we can assume moreover that $Z_D$ is \emph{nearly}
$J_0$-holomorphic, in the sense that its \emph{K\"ahler angle}
(see \cite{Donaldson:hypersurface}*{p.~669}) is arbitrarily small if
$D$ is sufficiently large.
It follows in particular that for any $\epsilon > 0$,
if $D > 0$ is sufficiently
large, one can find $J_1 \in \jJ_\tau(M,\omega)$ with
$\| J_1 - J_0 \|_{C^0} < \epsilon$ such that $Z_D$ is $J_1$-holomorphic.
We shall assume in the following that such a $J_1 \in \jJ_\tau(M,\omega)$
has been chosen and is fixed.

For an integer $k \ge 0$, suppose $T$ is a \defin{$k$-labelled tree}, i.e.~a
tree together with a partition of $\{1,\ldots,k\}$ assigning some
subset to each vertex $\alpha \in T$.  We shall write $\alpha E \beta$ whenever
$T$ contains an edge connecting the vertices $\alpha,\beta \in T$, and
denote by $\alpha_j \in T$ the vertex associated to $j \in \{1,\ldots,k\}$ by
the labelling.  Then if $S_\alpha$ denotes a copy of $S^2$ for each $\alpha \in T$,
we can regard a
\defin{nodal curve} with $k$ marked points \defin{modelled on $T$} as a tuple
$$
\mathbf{z} = \left( \{ z_{\alpha\beta} \in S_\alpha \}_{\alpha E \beta} , 
\{ z_j \in S_{\alpha_j} \}_{j \in \{1,\ldots,k\}} \right)
$$
such that for each $\alpha \in T$, all the points in this tuple lying on $S_\alpha$
(the \defin{special points}) are distinct.  We
associate to $\mathbf{z}$ the \emph{nodal Riemann surface}
$$
\Sigma_{\mathbf{z}} := \coprod_{\alpha \in T} S_\alpha \bigg/ z_{\alpha\beta} \sim
z_{\beta\alpha},
$$
where each component $S_\alpha$ is assumed to carry the standard complex structure~$i$.
The nodal curve $\mathbf{z}$ (or equivalently the nodal Riemann surface
$\Sigma_{\mathbf{z}}$) is called \defin{stable}
if for each vertex $\alpha \in T$, there are at least three special points; note that
this is actually a property of the labelled tree~$T$, so we can equivalently say $\mathbf{z}$ is
stable if it is modelled on a \defin{stable $k$-labelled tree}.
In this case, $\mathbf{z}$ represents an
element $[\mathbf{z}]$ of the Deligne-Mumford space~$\overline{\mM}_{0,k}$.  There is a natural
\defin{stabilization} map $\mathbf{z} \mapsto \st(\mathbf{z})$ 
that makes any nodal curve $\mathbf{z}$ into a
stable nodal curve $\st(\mathbf{z})$ by removing vertices with fewer than three special points
and placing marked points on neighboring vertices as necessary; this determines a
holomorphic surjection on the corresponding nodal Riemann surfaces
$$
\st : \Sigma_{\mathbf{z}} \to \Sigma_{\st(\mathbf{z})}.
$$

For each $\alpha \in T$, denote by $\jJ_{S_\alpha}$ a copy of the space $\jJ_{S^2}$ of
domain-dependent almost complex structures defined
in the previous section, and let
$$
\jJ_T := \prod_{\alpha \in T} \jJ_{S_\alpha}.
$$
For $J \in \jJ_T$, a \defin{nodal $J$-holomorphic map with $k$ marked points} is a pair 
$(\mathbf{z},\mathbf{u})$,
where $\mathbf{z}$ is a nodal curve with $k$ marked points modelled on $T$, and
$\mathbf{u} : \Sigma_{\mathbf{z}} \to M$ is a continuous map whose restriction to
each sphere $S_\alpha \subset \Sigma_{\mathbf{z}}$ is smooth and $J$-holomorphic 
(in the sense of \eqref{eqn:CR}) with
respect to the $S_\alpha$-dependent almost complex structure determined by~$J$.

Recall next that since $\overline{\mM}_{0,k+1}$ is a smooth manifold for any $k \ge 2$,
we can consider $\overline{\mM}_{0,k+1}$-dependent almost complex structures
$$
J \in \Gamma(\pr_2^*\End_\RR(TM)) \quad\text{ such that }\quad
J([\mathbf{z}],\cdot) \in \jJ_\tau(M,\omega),
$$
where as usual we denote the projection $\pr_2 : \overline{\mM}_{0,k+1} \times M \to M$.
For $k \ge 3$, this has a convenient interpretation using the canonical projection
$$
\pi : \overline{\mM}_{0,k+1} \to \overline{\mM}_{0,k}
$$
which forgets the last marked point and stabilizes the result.  Namely, for any
nodal curve $\mathbf{z}$ with $k$ marked points,
$\pi^{-1}([\st(\mathbf{z})])$ can be identified 
canonically with the nodal curve $\Sigma_{\st(\mathbf{z})}$, i.e.~we parametrize
$\pi^{-1}([\st(\mathbf{z})])$ via the position of the extra marked point.
Thus if $\mathbf{z}$ is modelled on the $k$-labelled
tree~$T$, we can associate to $\mathbf{z}$ and the family $J$ above a
$\Sigma_{\mathbf{z}}$-dependent almost complex structure
$$
J_{\mathbf{z}} \in \jJ_T, \qquad J_{\mathbf{z}}(z,\cdot) := J([\st(\mathbf{z}),\st(z)],\cdot),
$$
where we use $[\st(\mathbf{z}),\st(z)]$ as shorthand for the element of 
$\pi^{-1}([\st(\mathbf{z})]) \in
\overline{\mM}_{k+1}$ corresponding to $\st(z) \in \Sigma_{\st(\mathbf{z})}$ under the
above identification.  For technical reasons, it is important to consider
only families $J$ that are \defin{coherent} in the sense defined in
\cite{CieliebakMohnke:transversality}*{\S 3}, and we shall denote the space
of smooth $\overline{\mM}_{0,k+1}$-dependent $\omega$-tame almost complex structures
satisfying this condition by
$$
\jJ_{k+1} = \left\{ J : \overline{\mM}_{0,k+1} \to \jJ_\tau(M,\omega)\ |\ 
\text{$J$ is coherent} \right\}.
$$
For our purposes, all that we will need to know about the coherence
condition is stated in the following lemma, which follows immediately from
the definition in \cite{CieliebakMohnke:transversality}*{\S 3}.

\begin{lemma}
\label{lemma:coherence}
For any $J \in \jJ_{k+1}$, if $\mathbf{z}$ is a nodal curve modelled on the
$k$-labelled tree $T$, then for each $\alpha \in T$, the restriction of
the family 
$$
\Sigma_{\mathbf{z}} \to \jJ_\tau(M,\omega) : z \mapsto
J_{\mathbf{z}}(z,\cdot)
$$ 
to $S_\alpha$ depends only on $z \in S_\alpha$ and the 
special points of $\mathbf{z}$ on~$S_\alpha$. \qed
\end{lemma}

We can now define the moduli spaces needed for the Gromov-Witten invariants.
Given an integer $m \ge 0$ and $A \in H_2(M)$, let
$$
\ell := A \cdot [Z_D] = D \omega(A) \in \NN.
$$
We may easily assume $\ell > 3$ by making $D \in \NN$ sufficiently large (in general
it will be much larger).  Choose $J \in \jJ_{\ell+1}$ with the property that
$$
J([\mathbf{z}],\cdot) \equiv J_1 \quad \text{in a neighborhood of $Z_D$, for all
$[\mathbf{z}] \in \overline{\mM}_{0,\ell+1}$}.
$$
Using the canonical projection $\pi_m : \overline{\mM}_{0,m+\ell+1} \to
\overline{\mM}_{0,\ell+1}$ that forgets the first $m$ marked points and then
stabilizes, we can associate to $J$ a coherent 
$\overline{\mM}_{0,m+\ell+1}$-dependent almost complex structure $\pi_m^*J$.
Then for any nodal curve $\mathbf{z}$ modelled on an $(m+\ell)$-labelled 
tree~$T$, we regard a map $\mathbf{u} : \Sigma_{\mathbf{z}} \to M$ as 
$J$-holomorphic
if it satisfies the Cauchy-Riemann equation \eqref{eqn:CR} for the
$\Sigma_{\mathbf{z}}$-dependent almost complex structure
$(\pi_m^*J)_{\mathbf{z}}$.  Given homology classes
$$
\{A_\alpha \in H_2(M)\}_{\alpha \in T} \quad\text{ such that }\quad
\sum_{\alpha \in T} A_\alpha = A,
$$
the pair $(T,\{A_\alpha\})$ is called a \defin{weighted tree}, and 
it is called \defin{stable} if every vertex $\alpha \in T$ with $A_\alpha=0$
has at least three \emph{special points}, i.e.~marked points plus adjacent
vertices.  We define $\widetilde{\mM}_T^{\{A_\alpha\}}(M,J; Z_D)$ to be
the space of pairs $(\mathbf{z},\mathbf{u})$ as above such that
$\left[\mathbf{u}|_{S_\alpha}\right] = A_\alpha$ for each $\alpha \in T$ 
and $\mathbf{u}$ maps each of the last $\ell$ marked points into~$Z_D$.
Note that since $Z_D$ is $J$-holomorphic (as $J$ matches $J_1$ near~$Z_D$),
all isolated intersections of $\mathbf{u}$ with $Z_D$ are positive;
in particular, whenever $\mathbf{z}$ has no nodes and $A \ne 0$, the
relation $\ell = A \cdot [Z_D]$ implies that either the image of
$\mathbf{u}$ is contained in $Z_D$ or the intersections of $\mathbf{u}$
with $Z_D$ occur \emph{only} at the last $\ell$ marked points.  The
former is excluded under
suitable assumptions on $J$ and for sufficiently large $D \in \NN$, due to
\cite{CieliebakMohnke:transversality}*{Propositions~8.13 and~8.14}.

\begin{remark}
\label{remark:positive}
The class of holomorphic curves defined above has the
crucial property that \emph{all} isolated intersections with $Z_D$ are positive,
not only the guaranteed intersections at the last $\ell$ marked points.
Since the count of these intersections is controlled topologically, 
positivity provides 
the necessary lower bound on the number of marked points on components
of nodal curves, guaranteeing that such curves have stable domains
(see \cite{CieliebakMohnke:transversality} for details).
\end{remark}

We write $(\mathbf{z},\mathbf{u}) \sim (\mathbf{z}',\mathbf{u}')$ if
there exists a biholomorphic isomorphism
between the nodal curves $\mathbf{z}$ and $\mathbf{z}'$ such that
$\mathbf{u}$ and $\mathbf{u}'$ are correspondingly related by
reparametrization.  We then define the moduli space of 
\defin{$J$-holomorphic curves modelled on $(T,\{A_\alpha\})$} as
$$
\mM_T^{\{A_\alpha\}}(M,J; Z_D) = 
\widetilde{\mM}_T^{\{A_\alpha\}}(M,J; Z_D) \big/ \sim,
$$
along with the \defin{evaluation map},
$$
\ev = (\ev_1,\ldots,\ev_m) : \mM_T^{\{A_\alpha\}}(M,J; Z_D) \to M^m,
$$
which evaluates $\mathbf{u}$ at its first $m$ marked points.
If $m \ge 3$, we can also define the \defin{forgetful map}
$$
\Phi : \mM_T^{\{A_\alpha\}}(M,J; Z_D) \to \overline{\mM}_{0,m},
$$
which forgets both the map $\mathbf{u}$ and the last $\ell$ marked
points of $\mathbf{z}$, and then stabilizes the resulting nodal
curve with $m$ marked points.  The \emph{top stratum} is the component
$$
\mM_{0,m+\ell}^A(M,J; Z_D) := \mM_T^{\{A_\alpha\}}(M,J; Z_D),
\text{ where $|T| = 1$},
$$
consisting of equivalence classes $[(\mathbf{z},\mathbf{u})]$ such that
$\mathbf{z}$ has no nodes; in this case $\mathbf{u} : S^2 \to M$ is simply
a pseudoholomorphic sphere, for some domain-dependent almost complex
structure determined by $J$ and the positions of its last~$\ell$ marked points.
The union of the spaces
$\mM_T^{\{A_\alpha\}}(M,J; Z_D)$ for all stable weighted trees 
$(T,\{A_\alpha\})$ with $\sum_\alpha A_\alpha = A$ carries a 
natural topology as a metrizable Hausdorff space, the \emph{Gromov topology},
and we denote by
$$
\overline{\mM}_{0,m+\ell}^A(M,J; Z_D) \subset
\bigcup_{\text{$(T,\{A_\alpha\})$ stable}} \mM_T^{\{A_\alpha\}}(M,J; Z_D)
$$
the closure of $\mM_{0,m+\ell}^A(M,J; Z_D)$ in this space.

If $m \ge 3$, then for suitable choices of $J \in \jJ_{\ell+1}$ matching 
the reference structure $J_1$ near $Z_D$,
\begin{equation}
\label{eqn:pseudocycle}
(\ev,\Phi) : \mM_{0,m+\ell}^A(M,J;Z_D) \to M^m \times \overline{\mM}_{0,m}
\end{equation}
is a pseudocycle of dimension
$$
\dim \mM_{0,m+\ell}^A(M,J;Z_D) = 2(n-3) + 2c_1(A) + 2m,
$$
and the resulting rational Gromov-Witten invariants
$$
\GW^{(M,\omega)}_{0,m,A} : 
H^*(M; \QQ)^{\otimes m} \otimes H_*( \overline{\mM}_{0,m} ; \QQ) \to \QQ,
$$
\begin{equation}
\label{eqn:GWgeneral}
\begin{split}
\GW^{(M,\omega)}_{0,m,A}(\alpha_1,\ldots,\alpha_m,\beta) &= \\
\frac{1}{\ell!} [(\ev, & \Phi)] \cdot \left( \PD(\alpha_1) \times
\ldots \times \PD(\alpha_m) \times \beta \right)
\end{split}
\end{equation}
are independent of all choices.  If one excludes the forgetful map and
$\beta \in H_*\left(\overline{\mM}_{0,m}\right)$ from this statement, then it 
is simply the main result of \cite{CieliebakMohnke:transversality} (and is
also valid for any $m \ge 0$).  
We will explain in Appendix~\ref{sec:forgetful} how the arguments of
Cieliebak and Mohnke
can be modified to include the forgetful map in the discussion.

As alluded to above, the constructions in \cite{CieliebakMohnke:transversality}
require some extra assumptions on $J \in \jJ_{\ell+1}$ in order to define
the Gromov-Witten invariants, but the details of these assumptions will not
concern us beyond the following analogue of Lemma~\ref{lemma:sequenceSemipositive}.
Recall that we have fixed a reference almost complex structure $J_1$
for which the Donaldson hypersurface $Z_D$ is $J_1$-holomorphic.  We can
trivially regard $J_1$ as an element of $\jJ_{\ell+1}$ with constant
dependence on $\overline{\mM}_{0,\ell+1}$.  Then any other element of
$\jJ_{\ell+1}$ that is $C^0$-close to $J_1$ can be written as
$$
J = \exp_{J_1} Y
$$
for some $Y \in T_{J_1} \jJ_{\ell+1}$, where the latter is the Fr\'echet space of
\emph{coherent} (see \cite{CieliebakMohnke:transversality}*{\S 3}) smooth 
sections of $\pr_2^*\End_\RR(TM) \to \overline{\mM}_{0,\ell+1} \times M$ satisfying
$$
Y([\mathbf{z}],p) J_1(p) + J_1(p) Y([\mathbf{z}],p) = 0
\quad \text{ for all $([\mathbf{z}],p) \in \overline{\mM}_{0,\ell+1} \times M$}.
$$

\begin{lemma}
\label{lemma:sequence}
There exists a sequence $Y_k \in T_{J_1} \jJ_{\ell+1}$ converging to~$0$ in 
$C^\infty$ such that \eqref{eqn:GWgeneral} holds with the Gromov-Witten 
pseudocycle \eqref{eqn:pseudocycle} defined for any
$J = \exp_{J_1} Y_k$. \qed
\end{lemma}

\subsection{Donaldson hypersurfaces transverse to a contact hypersurface}
\label{sec:Mohsen}

In order to apply the Gromov-Witten invariants of 
\cite{CieliebakMohnke:transversality} to a situation involving 
pseudoconvex hypersurfaces, we need the following additional fact about
Donaldson hypersurfaces.

\begin{prop}
\label{prop:hypersurfaces}
Suppose $(M,\omega)$ is a closed $2n$-dimensional symplectic manifold with 
$[\omega] \in H^2(M;\ZZ)$, $J_0$ is an $\omega$-compatible
almost complex structure, and $V \subset M$ is a closed $(2n-1)$-dimensional 
$J_0$-convex hypersurface with induced contact structure
$$
\xi = TV \cap J_0(TV) \subset TV.
$$  
Then for all $D \in \NN$ sufficiently large, there exists a Donaldson hypersurface
$Z_D \subset (M,\omega)$ of degree $D$ 
that intersects $V$ transversely in a contact submanifold
of $(V,\xi)$.  Moreover, for any $\epsilon > 0$, if $D \in \NN$ is sufficiently
large, then one can find $Z_D$ with the above property and 
an $\omega$-tame almost complex structure $J_1$ on $M$ such that
\begin{enumerate}
\item $Z_D$ is $J_1$-holomorphic;
\item $V$ is $J_1$-convex with $\xi = TV \cap J_1(TV)$;
\item $\| J_1 - J_0 \|_{C^0} < \epsilon$.
\end{enumerate}
\end{prop}

The proposition is a straightforward application of Mohsen's 
relative version \cite{Mohsen:hypersurfaces} of an
estimated transversality result of Donaldson and Auroux
\cites{Donaldson:hypersurface,Auroux:asympHolomorphic}.
To explain this, we must recall some details from the
asymptotically holomorphic methods of Donaldson and Auroux,
as used by Mohsen.

We first need to define a quantitative measurement of the
distance of a real subspace of a complex vector space from
being complex.  Suppose $(E,J)$ is a finite-dimensional
complex vector space with Hermitian
inner product~$g$, and write $|v| := \sqrt{g(v,v)}$ for $v \in E$.  Then
for any real-linear subspace $E' \subset E$ of even dimension, define
\begin{equation*}
\begin{split}
\Theta_g(E' ; E,J) &:= \max_{v \in E',\ |v|=1}
\dist\left( J v, E'\right) \\
&= \max_{v \in E',\ |v|=1} \left( \min_{w \in E'}
| Jv - w| \right).
\end{split}
\end{equation*}
It will be useful to note that this definition depends on the Hermitian
metric only up to positive rescaling, i.e.
\begin{equation}
\label{eqn:conformal}
\Theta_{cg}(E' ; E,J) = \Theta_g(E' ; E,J) \quad \text{ for all $c > 0$}.
\end{equation}
It also depends
continuously on all the data, thus if $B$ is a compact space
and $(E,J) \to B$ is a complex vector bundle of finite rank
with Hermitian bundle metric~$g$,
then for any real subbundle $E' \subset E$ of even rank, we
can similarly define
$$
\Theta_g(E' ; E,J) := \max_{p \in B} \Theta_g(E'_p ; E_p,J) \ge 0.
$$
Observe that if $\omega$ is any symplectic structure on $(E,J)$ that tames~$J$,
then any sufficiently small perturbation of a complex subbundle is
automatically also a symplectic subbundle, thus we have the following.
\begin{lemma}
\label{lemma:Theta}
Suppose $B$ is a compact space and $(E,J) \to B$ is a complex vector 
bundle of finite rank, equipped with a Hermitian bundle metric~$g$.
In each of the following
statements, assume $E' \subset E$ is a real subbundle of even rank.
\begin{enumerate}
\item[(a)] $E'$ is a complex subbundle of 
$(E,J)$ if and only if $\Theta_g(E' ; E,J) = 0$.
\item[(b)] For any $C^0$-open neighborhood $\uU_J$ of $J$ in the space of
smooth complex structures on~$E$, 
there exists a number $c > 0$ such that
every $E' \subset E$ with $\Theta_g(E' ; E,J) < c$
is a complex subbundle of $(E,J')$ for some $J' \in \uU_J$.
\item[(c)] For any symplectic structure $\omega$ on $E \to B$ that tames~$J$,
there exists a number $c' > 0$ such that every $E' \subset E$ 
satisfying $\Theta_g(E' ; E,J) < c'$ is a symplectic subbundle of 
$(E,\omega)$.
\end{enumerate} \qed
\end{lemma}

In order to relate the above definition to questions of estimated 
transversality, we define (following \cite{Mohsen:hypersurfaces}) 
for any real-linear map $A : V \to W$ between finite-dimensional
Euclidean vector spaces, the \defin{surjectivity modulus}
$$
\Surj(A) := \min_{\lambda \in W^* \setminus \{0\}}
\frac{\| \lambda \circ A \|}{\|\lambda\|} \ge 0.
$$
\begin{lemma}
\label{lemma:surjectivity}
The surjectivity modulus has the following properties.
\begin{enumerate}
\item[(a)] $\Surj(A) > 0$ if and only if $A$ is surjective, and in this case
$$
\Surj(A) \ge \sup\left\{ \frac{1}{\| B \|} \ \bigg|\ 
\text{$B : W \to V$ is a right inverse of $A$} \right\}.
$$
\item[(b)]
For any two real-linear maps $A, B : V \to W$,
$$
\Surj(A + B) \ge \Surj(A) - \| B \|.
$$
\item[(c)]
Suppose $(V,J,g)$ and $(V',J',g')$ are finite-dimensional 
Hermitian vector spaces and
$A = A^{1,0} + A^{0,1} : V \to V'$ is real-linear, where $A^{1,0}$ and
$A^{0,1}$ denote the complex linear and antilinear parts respectively.
Then
\begin{equation}
\label{eqn:ratio}
\Theta_g(\ker A ; V,J) \le 2 \frac{\| A^{0,1} \|}{\Surj(A)}.
\end{equation}
\end{enumerate}
\end{lemma}
\begin{proof}
The first two properties are proved by straightforward computations.
The following proof of the third property was explained to me by
Jean-Paul Mohsen.

Let $V_{\ker A}^* = \left\{ \mu \in V^*\ |\ \mu|_{\ker A} = 0 \right\}$,
which is precisely the space of dual vectors on $V$ of the form
$\{ \mu = \lambda \circ A \in V^*\ |\ \lambda \in W^* \}$.
Now suppose $v \in \ker A$ and $|v| = 1$.  The distance of $Jv$ from
$\ker A$ is the norm of its second part under the orthogonal 
decomposition $V = (\ker A) \oplus (\ker A)^{\perp}$, hence
\begin{equation*}
\begin{split}
\dist(Jv,\ker A) &= \max_{w \in (\ker A)^{\perp} \setminus \{0\}} 
\frac{\left| \langle w,Jv \rangle \right|}{|w|} =
\max_{\mu \in V_{\ker A}^* \setminus \{0\}} \frac{|\mu(Jv)|}{\|\mu\|} \\
&= \max_{\lambda \in W^* \setminus \{0\}} \frac{|\lambda \circ A(Jv)|}{\|\lambda \circ A\|}.
\end{split}
\end{equation*}
Now, using the fact that $Av=0$ and that $A^{1,0}$ commutes while $A^{0,1}$
anticommutes with the complex structures, we have
$$
A(Jv) = A^{1,0}Jv + A^{0,1}Jv = J' A^{1,0}v - J' A^{0,1} v = 
-2 J' A^{0,1} v,
$$
hence $|A(Jv)| \le 2 \| A^{0,1} \|$, implying
$$
\dist(Jv,\ker A) \le \max_{\lambda \in W^* \setminus \{0\}}
\frac{2 \| \lambda \| \cdot \|A^{0,1}\|}{\| \lambda \circ A \|} =
2 \frac{\| A^{0,1} \|}{\Surj(A)}.
$$
\end{proof}

Next, assume $(M,\omega)$ is a closed symplectic manifold with
$[\omega] \in H^2(M;\ZZ)$, and $J_0$ is an $\omega$-compatible almost
complex structure.  This determines the sequence of Riemannian metrics
$$
g := \omega(\cdot,J\cdot), \qquad g_D := D \cdot g \text{ for $D \in \NN$}
$$
on~$M$.  Let $L \to M$ denote a complex line bundle with $c_1(L) = [\omega]$,
equipped with a Hermitian metric $\langle\ ,\ \rangle$ and
a Hermitian connection $\nabla$ whose curvature $2$-form is $-2\pi i \omega$.  
For $D \in \NN$, we also consider the $D$-fold tensor power $\LD \to M$, 
with its induced Hermitian metric and Hermitian
connection, also denoted by $\langle\ ,\ \rangle$ and $\nabla$ respectively; 
the latter has curvature $-2\pi i D\omega$.  For sections
$s : M \to \LD$, we denote by $\p s$ and $\dbar s$ respectively
the complex linear and antilinear parts of the covariant 
derivative~$\nabla s$.  We will always define $C^0$-norms of
$\nabla s$ and related tensors with respect to the metrics $g_D$ on
$TM$ and $\langle\ ,\ \rangle$ on $\LD$, e.g.
\begin{equation*}
\begin{split}
\| \nabla s(p) \|_{g_D} &:= \max_{X \in T_p M \setminus \{0\}}
\frac{| \nabla_X s |}{|X|_{g_D}} \qquad \text{ for $p \in M$},\\
\| \nabla s \|_{g_D} &:= \sup_{p \in M} \| \nabla s(p) \|_{g_D},
\end{split}
\end{equation*}
where $| X |_{g_D} := \sqrt{g_D(X,X)}$ for $X \in T_p X$ and
$| v | := \sqrt{\langle v,v \rangle}$ for $v \in \LD_p$.
The surjectivity modulus of $\nabla s(p)$ at points $p \in M$ will also
be defined relative to this choice of metrics, which we shall indicate
via the notation
$$
\Surj_{g_D}(\nabla s(p)) := \min_{0 \ne \lambda \in \Hom_\RR\left(\LD_p,\RR\right)}
\frac{ \| \lambda \circ \nabla s(p) \|_{g_D}}{\| \lambda \|}.
$$
This means $\Surj_{g_D}(\nabla s(p)) = \frac{1}{\sqrt{D}} \Surj_g(\nabla s(p))$.

The next two definitions are essentially due to Auroux 
\cite{Auroux:asympHolomorphic}, though we have made minor modifications
to fit them into the framework of \cite{Mohsen:hypersurfaces}.

\begin{defn}
Given constants $C > 0$ and $r \in \NN$, we say that 
a sequence of sections $s_D : M \to \LD$ (for large $D \in \NN$) is
\defin{$C$-asymptotically holomorphic up to order $r \in \NN$} if
for all $D$ sufficiently large,
\begin{equation}
\label{eqn:asympHol}
\begin{split}
\| s_D \|_{g_D} \le C, \qquad
&\| \nabla^m s_D \|_{g_D} \le C, \qquad
\| \nabla^{m-1} \dbar s_D \|_{g_D} \le \frac{C}{\sqrt{D}} \\
&\text{ for each $m=1,\ldots,r$}.
\end{split}
\end{equation}
\end{defn}

\begin{defn}
Given a constant $\eta > 0$ and a submanifold $V \subset M$, 
we say that a sequence of sections
$s_D : M \to \LD$ (for large $D \in \NN$) is \defin{$\eta$-transverse
along~$V$} if for all sufficiently large $D$,
$$
| s_D(p) | < \eta \quad \Rightarrow \quad 
\Surj_{g_D}\left( \nabla s_D(p)|_{T_p V} \right) \ge \eta \qquad
\text{ for all $p \in V$.}
$$
\end{defn}

For any $(M,\omega)$ and $J_0$ as above,
Donaldson \cite{Donaldson:hypersurface} constructs a sequence of
sections $s_D : M \to \LD$ that are, for some $K , \eta > 0$, 
$K$-asymptotically holomorphic 
up to order~$2$ and globally $\eta$-transverse 
(i.e.~$\eta$-transverse along~$M$).
It follows via \eqref{eqn:conformal} and 
Lemma~\ref{lemma:surjectivity}(c) that for sufficiently 
large $D \in \NN$, $Z_D := s_D^{-1}(0) \subset M$ are smooth submanifolds with
\begin{equation*}
\begin{split}
\Theta_g(TZ_D ; TM|_{Z_D},J_0) &= \Theta_{g_D}(TZ_D ; TM|_{Z_D},J_0) \\
&\le \max_{p \in Z_D}
\frac{2 \| \dbar s_D(p) \|_{g_D}}{\Surj_{g_D}\left( \nabla s(p) \right)} \\
&\le 2 \frac{ K / \sqrt{D}}{\eta} \to 0 \quad\text{ as $D \to \infty$}.
\end{split}
\end{equation*}
Thus by Lemma~\ref{lemma:Theta}, the submanifolds 
$Z_D \subset (M,\omega)$ are symplectic and uniformly close to being
$J_0$-holomorphic for sufficiently large~$D$.  These are the
Donaldson hypersurfaces that we made use of in the previous section; 
indeed, they satisfy $\PD[Z_D] = c_1(\LD) = D c_1(L) = D[\omega] \in H^2(M)$.

For our purposes, the relevant case of Mohsen's extension of the 
Donaldson-Auroux transversality theorem can now be stated as follows.

\begin{prop}[\cite{Mohsen:hypersurfaces}*{Th\'{e}or\`{e}me~2.2}]
\label{prop:Mohsen}
Assume $(M,\omega)$ is a closed $2n$-dimensional symplectic manifold with 
an $\omega$-compatible almost complex structure $J_0$, $V \subset M$ is a 
closed submanifold of dimension $2n-1$, and
$\xi \subset TV$ denotes the $J_0$-complex subbundle
$$
\xi := TV \cap J_0(TV).
$$
Then given any $K > 0$, $\epsilon > 0$ and $\mmax \in \NN$, there exist 
$D_0 \in \NN$  and $\eta > 0$ such that the following holds.
For any sequence of sections $s_D : M \to \LD$ (for large~$D$) which are
$K$-asymptotically holomorphic up to order~$2$, there exists a
sequence (for large~$D$) of sections $t_D : M \to \LD$ such that,
for all $D \ge D_0$, the sequence $t_D$ is
$\epsilon$-asymptotically holomorphic up to order~$\mmax$, and the
sequence $s_D' := s_D + t_D$ is $\eta$-transverse along~$V$, and
also satisfies
$$
\text{$p \in V$ and $|s_D'(p)| < \eta$} \quad\Rightarrow\quad
\Surj_{g_D}\left( \nabla s_D'(p)|_{\xi_p} \right) \ge \eta.
$$ \qed
\end{prop}

\begin{proof}[Proof of Proposition~\ref{prop:hypersurfaces}]
Assume $V \subset M$ is $J_0$-convex, and let $s_D : M \to \LD$ denote the
$K$-asymptotically holomorphic and globally $\eta$-transverse sequence
of sections provided by \cite{Donaldson:hypersurface}.  Pick 
$\epsilon \in (0,\eta)$, and let $t_D : M \to \LD$ denote the 
$\epsilon$-asymptotically holomorphic sequence provided by
Proposition~\ref{prop:Mohsen}, giving rise to the perturbed sections
$s_D' := s_D + t_D$ and zero-sets $Z_D := (s_D')^{-1}(0) \subset M$.  
Using Lemma~\ref{lemma:surjectivity}(b),
we may assume $s_D'$ is also $K$-asymptotically holomorphic and
$\eta$-transverse after making the substitutions
$K \mapsto K + \epsilon > 0$ and $\eta \mapsto \eta - \epsilon > 0$, and by
shrinking $\eta > 0$ further if 
necessary, Proposition~\ref{prop:Mohsen} also guarantees
$$
\Surj_{g_D}\left( \nabla s_D'(p)|_{\xi_p}\right) \ge \eta
$$
for all $p \in Z_D \cap V$.  This implies that for sufficiently large~$D$,
$Z_D \subset (M,\omega)$ is a symplectic submanifold 
and intersects both $V$ and the
distribution $\xi \subset TV$ transversely, hence the submanifold
$$
\Sigma_D := Z_D \cap V \subset V
$$
inherits a smooth oriented hyperplane bundle
$$
\xi_D := T Z_D \cap \xi \subset T\Sigma_D.
$$
Regarding $\xi_D$ as a real subbundle of the complex vector bundle
$(\xi|_{\Sigma_D},J_0)$, Lemma~\ref{lemma:surjectivity}(c) and
\eqref{eqn:conformal} now imply
$$
\Theta_g\left(\xi_D ; \xi|_{\Sigma_D},J_0\right) \le
\max_{p \in \Sigma_D} \frac{2 \| \dbar s_D'(p)|_{\xi_p} \|_{g_D}}{\Surj_{g_D}\left(\nabla s_D'(p)|_{\xi_p}\right)}
\le \frac{2K}{\eta \sqrt{D}} \to 0
$$
as $D \to \infty$.  Since $V$ is $J_0$-convex,
there exists a contact form $\alpha$ on $V$ such that $\xi = \ker\alpha$
and $d\alpha|_{\xi}$ is a symplectic vector bundle structure that tames~$J_0$.
Applying Lemma~\ref{lemma:Theta}, we therefore conclude from the above
that $(\xi_D,d\alpha)$ is a symplectic subbundle of $(\xi|_{\Sigma_D},d\alpha)$
for sufficiently large~$D$, implying that $\alpha|_{T\Sigma_D}$ is contact,
so $\Sigma_D \subset (V,\xi)$ is a contact submanifold.  Moreover,
the complex structure $J_0|_{\xi}$ along $\Sigma_D$ admits a $C^0$-small 
perturbation to a complex structure $J_1$ on $\xi$ along $\Sigma_D$ for which
$\xi_D$ is $J_1$-invariant.  Following the extension procedure of
\cite{CieliebakMohnke:transversality}*{\S 8}, $J_1$ can then
be extended to an almost complex
structure on $M$ that preserves $\xi$ along~$V$, preserves $TZ_D$ and is
$C^0$-close to $J_0$ for sufficiently large~$D$.  Note that having $J_1$
be $C^0$-close to $J_0$ implies that $J_1|_{\xi}$ is also tamed by
$d\alpha|_{\xi}$ without loss of generality, thus $V$ is $J_1$-convex.
\end{proof}

\section{The proof}
\label{sec:proof}

We now proceed to the proof of the main theorem.

Suppose $(M,\omega)$ is a closed and connected symplectic 
manifold with an almost complex structure $J$ such that either of the 
following conditions are satisfied:
\begin{itemize}
\item $(M,\omega)$ is semipositive and $J$ is $\omega$-tame;
\item $[\omega] \in H^2(M;\ZZ)$ and $J$ is $\omega$-compatible.
\end{itemize}
We will assume the Gromov-Witten invariants to be defined via the prescriptions
in \S\ref{sec:semipositive} or \S\ref{sec:CM} accordingly.
Suppose $V \subset M$ is a $J$-convex hypersurface.
Arguing by contradiction, we assume there is a nontrivial
Gromov-Witten invariant of the form
\begin{equation}
\label{eqn:GWuniruled}
\GW^{(M,\omega)}_{0,m,A}(\PD[V] \cup \alpha_1,\alpha_2,\ldots,\alpha_m ; \beta) \ne 0
\end{equation}
for some $m \ge 3$, $A \in H_2(M)$, $\alpha_1,\ldots,\alpha_m \in H^*(M;\QQ)$ and 
$\beta \in H_*(\overline{\mM}_{0,m};\QQ)$.  
The essential idea of the proof will be show that \eqref{eqn:GWuniruled} 
implies the existence of a pseudoholomorphic sphere that touches $V$ tangentially from 
the wrong side, thus contradicting pseudoconvexity.

\begin{remark}
\label{remark:unified}
In the following we will give a unified argument that applies to both the
semipositive and non-semipositive cases, referring as necessary to the
slightly different sets of definitions in \S\ref{sec:semipositive}
and~\S\ref{sec:CM}.  For the semipositive case, some statements would need to
be modified in obvious ways by removing all references to
$\beta \in H_*(\overline{\mM}_{0,m})$ and the forgetful map
(see Remark~\ref{remark:gravity}).
\end{remark}

We must now choose a perturbed almost complex structure $J_1$ that
is suitably adapted to the definition of the Gromov-Witten invariants.  In the
semipositive case, it suffices to set $J_1 = J$.  If $(M,\omega)$ is not
semipositive, then we have assumed $[\omega] \in H^2(M;\ZZ)$ and can therefore
find a sequence of Donaldson hypersurfaces $Z_D$ of large degrees $D \in \NN$
as described in \S\ref{sec:CM}.  By Proposition~\ref{prop:hypersurfaces},
after making the degree sufficiently large, we can find a smooth 
$\omega$-tame almost complex structure $J_1$ that is arbitrarily $C^0$-close
to $J$ while making $Z_D$ a $J_1$-holomorphic hypersurface and
$V$ simultaneously a $J_1$-convex hypersurface.  We shall treat $J_1$ as the \emph{reference}
almost complex structure used in Lemmas~\ref{lemma:sequenceSemipositive}
and~\ref{lemma:sequence}. 

Let $J'$ denote a generic domain-dependent or 
$\overline{\mM}_{\ell+1}$-dependent perturbation of $J_1$ as described in
\S\ref{sec:semipositive} or~\S\ref{sec:CM} respectively, 
giving rise to the moduli
space $\mM_{0,m}^A(M,J')$ of $J'$-holomorphic spheres homologous to~$A$,
with the associated evaluation/forgetful pseudocycle
$$
(\ev,\Phi) = (\ev_1,\ldots,\ev_m,\Phi) : \mM_{0,m}^A(M,J') \to M^m \times \overline{\mM}_{0,m}.
$$
In the non-semipositive case, we are assuming as in \S\ref{sec:CM} that
$J'$ matches $J_1$ near $Z_D$ and the elements of $\mM_{0,m}^A(M,J')$ have
extra marked points constrained to lie in $Z_D$ under evaluation, but these
details will play no role in what follows and we will therefore
suppress them in the notation.  The condition \eqref{eqn:GWuniruled} now means
$$
[(\ev,\Phi)] \cdot \Big( \left([V] \cdot \PD(\alpha_1)\right) \times
\PD(\alpha_2) \times \ldots \times \PD(\alpha_m) \times \beta \Big) \ne 0.
$$

\begin{lemma}
\label{lemma:lemmu}
There exists a smooth loop
$$
\ell : S^1 \to \mM_{0,m}^A(M,J')
$$
such that $(\ev_1 \circ \,\ell)_*[S^1] \cdot [V] \ne 0$.
\end{lemma}
\begin{proof}
We lose no generality by supposing that the classes $\alpha_1,\ldots,\alpha_m
\in H^*(M;\QQ)$ and $\beta \in H_*\left(\overline{\mM}_{0,m}\right)$ are each
homogeneous, i.e.~they have well-defined degrees.
By a theorem of Thom \cite{Thom:bordism}, there are rational numbers $c_0,\ldots,c_m \ne 0$ and smooth submanifolds
$\bar{\alpha}_1,\ldots,\bar{\alpha}_m \subset M$ and $\bar{\beta} \subset \overline{\mM}_{0,m}$ such that
\begin{equation*}
\begin{split}
c_0 [\bar{\beta}] &= \beta \in H_*(\overline{\mM}_{0,m};\QQ), \\
c_i [\bar{\alpha}_i] &= \PD(\alpha_i) \in H_*(M;\QQ) 
\quad \text{ for $i=1,\ldots,m$}.
\end{split}
\end{equation*}
We claim that after generic smooth perturbations of these submanifolds, we 
may assume the pseudocycle $(\ev,\Phi)$ is weakly transverse to
$\bar{\alpha}_1 \times \ldots \times \bar{\alpha}_m \times \bar{\beta}$
in the sense of \cite{McDuffSalamon:Jhol}*{Definition~6.5.10}. Indeed,
we can perturb $\bar{\alpha}_1$ such that $\ev_1$ is weakly transverse to
$\bar{\alpha}_1$, so by \cite{McDuffSalamon:Jhol}*{Lemma~6.5.14},
$$
\ev_2|_{\ev_1^{-1}(\bar{\alpha}_1)} : \ev_1^{-1}(\bar{\alpha}_1) \to M
$$
is a pseudocycle of dimension $\dim \mM_{0,m}^A(M,J') - \deg \alpha_1$.
After perturbing $\bar{\alpha}_2$, we may also assume this new pseudocycle is
weakly transverse to $\bar{\alpha}_2$, which means $(\ev_1,\ev_2)$ is now
weakly transverse to $\bar{\alpha}_1 \times \bar{\alpha}_2$.  Repeating this
procedure $m+1$ times proves the claim.  With this established, we can define
the \emph{constrained moduli space}
$$
\mM' := (\ev,\Phi)^{-1}(\bar{\alpha}_1 \times \ldots \times \bar{\alpha}_m
\times \bar{\beta}),
$$
so that $(\ev,\Phi)|_{\mM'}$ is a $1$-dimensional pseudocycle,
which means $\mM'$ is a \emph{compact} $1$-dimensional submanifold of
$\mM_{0,m}^A(M,J')$.  Now choose a generic smooth perturbation $V'$ of
$V \subset M$ such that
$$
\bar{\alpha}_1 \pitchfork V' \quad\text{ and }\quad
\ev_1|_{\mM'} \pitchfork V'.
$$
We then have
\begin{equation}
\begin{split}
\label{eqn:Poincare}
c_0 \ldots & c_m \Big( (\ev_1)_*[\mM'] \cdot [V] \Big) = \\
& [(\ev,\Phi)] \cdot \Big( \left( [V] \cdot \PD(\alpha_1)\right) \times
\PD(\alpha_2) \times \ldots \times \PD(\alpha_m) \times \beta\Big) \ne 0.
\end{split}
\end{equation}
Any connected component of $\mM'$ on which the above intersection number is 
nonzero is then a smooth loop with the stated property.
\end{proof}

In order to apply this lemma in proving the main result, 
we shall borrow an idea from \cite{AlbersBramhamWendl}.
Observe that by \eqref{eqn:GWuniruled}, $[V] \in H_*(M;\QQ)$ must be 
nontrivial, so~$V$ is nonseparating.  One can therefore construct a
connected infinite cover of~$M$, defined by cutting $M$ open along~$V$ to 
produce a cobordism with boundary $-V \sqcup V$, and then gluing together 
an infinite chain of copies $\{ M_n \}_{n \in \ZZ}$ of this cobordism.  
Denote for each $n \in \ZZ$ the boundary of the 
cobordism $M_n$ by
$$
\p M_n = -V_n^- \sqcup V_n^+,
$$
then each $V_n^\pm$ has a neighborhood in $M_n$ naturally identified with a 
suitable half-neighborhood of~$V$ in~$M$, and we use these identifications 
to glue $M_n$ to $M_{n+1}$ along $V_n^+ = V_{n+1}^-$.  This produces a 
smooth, connected and noncompact manifold (see Figure~\ref{fig:infiniteChain})
$$
\widetilde{M} = \bigcup_{n\in \ZZ} M_n, 
$$
which has a natural smooth covering projection
$$
\pi : \widetilde{M} \to M
$$
and is separated by infinitely many copies of the hypersurface~$V$, which we 
shall denote by
$$
V_n := V_n^+ \subset \widetilde{M}.
$$
Let
$$
\widetilde{J}_1 := \pi^*J_1
$$
denote the natural lift of the reference almost complex structure $J_1$
to the cover $\widetilde{M}$, for which the hypersurfaces 
$V_n$ are all $\widetilde{J}_1$-convex.

\begin{figure}
\psfrag{M}{$M$}
\psfrag{Mtilde}{$\widetilde{M}$}
\psfrag{V}{$V$}
\psfrag{V1}{$V_1$}
\psfrag{V0}{$V_0$}
\psfrag{V-1}{$V_{-1}$}
\psfrag{V-2}{$V_{-2}$}
\psfrag{M0}{$M_0$}
\psfrag{M1}{$M_1$}
\psfrag{M-1}{$M_{-1}$}
\psfrag{pi}{$\pi$}
\centering
\includegraphics{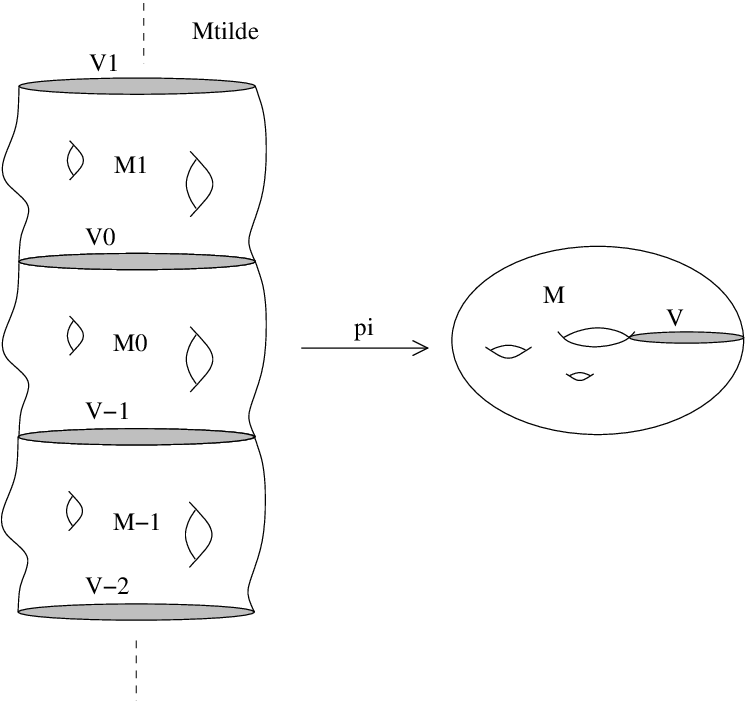}
\caption{The cover $\pi : \widetilde{M} \to M$ defined for a nonseparating hypersurface $V \subset M$.}
\label{fig:infiniteChain}
\end{figure}

By Lemma~\ref{lemma:sequenceSemipositive} or~\ref{lemma:sequence}, we
can find a sequence $J^k$ of generic structures for which 
Lemma~\ref{lemma:lemmu} holds with $J' : = J^k$, producing loops
$$
\ell_k : S^1 \to \mM_{0,m}^A(M,J^k)
\quad\text{ with }\quad
(\ev_1 \circ \,\ell_k)_*[S^1] \cdot [V] \ne 0 \text{ for all~$k$},
$$
and we may assume moreover that $J^k$ converges in $C^\infty$ as $k \to \infty$ 
to the domain-independent almost complex structure~$J_1$.
For each $k$ and each $\tau \in S^1$, $\ell_k(\tau) \in \mM_{0,m}^A(M,J^k)$ 
is an equivalence class of
spheres $u : S^2 \to M$ satisfying a domain-dependent Cauchy-Riemann
equation as in \eqref{eqn:CR}.  Since $S^2$ is simply connected, each of
the loops $\ell_k$ can be lifted to
$\widetilde{M}$ as a continuous family of holomorphic spheres 
$\{ u_\tau^k \}_{\tau \in \RR}$, and the nontrivial intersection 
of $\ev_1 \circ\, \ell_k$ with~$V$ implies that evaluation of $u_\tau^k$ 
at the first marked point traces a noncompact path in $\widetilde{M}$ 
intersecting $M_n$ for every $n \in \ZZ$.  It follows that for each~$k$, 
there exists a parameter value $\tau_*^k \in \RR$ for which the image of 
$u^k_{\tau_*^k}$ touches~$V_0$ but not the interior of~$M_1$.

We now have a sequence of curves $u^k := u^k_{\tau_*^k} 
\in \mM_{0,m}^A(M,J^k)$ which
admit lifts to $\widetilde{M}$ that touch~$V_0$ but not the interior of~$M_1$.
This is not yet a contradiction, because the Cauchy-Riemann equation
satisfied by each $u^k$ involves a domain-dependent almost complex structure.
As $k \to \infty$, however, Gromov compactness gives a subsequence of $u^k$ 
converging to a nodal $J_1$-holomorphic sphere, and at least one
component of this nodal curve lifts to a nontrivial 
$\widetilde{J}_1$-holomorphic sphere in $\widetilde{M}$
that touches $V_0$ tangentially from below.  Since $V_0$ is a
$\widetilde{J}_1$-convex hypersurface, 
this is a contradiction and thus concludes the proof.

\appendix

\section{The forgetful map is a pseudocycle}
\label{sec:forgetful}

The purpose of this appendix is to justify the statement, made in \S\ref{sec:CM},
that for suitably chosen data, the evaluation/forgetful map
$$
(\ev,\Phi) : \mM_{0,m+\ell}^A(M,J;Z_D) \to M^m \times \overline{\mM}_{0,m}
$$
as defined in the setting of Cieliebak and Mohnke 
\cite{CieliebakMohnke:transversality} is a pseudocycle, and its
rational cobordism class (after dividing by $\ell!$)
is independent of the choices.  This is proved in 
\cite{CieliebakMohnke:transversality} for $\ev : \mM_{0,m+\ell}^A(M,J;Z_D) \to M^m$,
without accounting for the forgetful map, though the arguments necessary
for proving the more general statement are almost already present in
\cite{CieliebakMohnke:transversality}, so we shall merely sketch the
necessary modifications.

In the following, we will often refer to holomorphic curves that carry
distinct sets of \defin{ordinary} and \defin{extra} marked points; for
curves in the space $\mM_{0,m+\ell}^A(M,J;Z_D)$, this means the first~$m$ and
last~$\ell$ marked points respectively.  Recall that
the forgetful map $\Phi : \overline{\mM}_{0,m+\ell}(M,J;Z_D) \to
\overline{\mM}_{0,m}$ is defined by forgetting not only the map into~$M$
but also the extra marked points, and then stabilizing.

\begin{remark}
\label{remark:notCompact}
Although $\Phi$ maps the top stratum
$\mM_{0,m+\ell}^A(M,J;Z_D)$ into the top stratum $\mM_{0,m}$ of
$\overline{\mM}_{0,m}$, it will not generally define a pseudocycle
$\mM_{0,m+\ell}^A(M,J;Z_D) \to \mM_{0,m}$, mainly because $\mM_{0,m}$
itself is not compact.
\end{remark}

We assume as in \S\ref{sec:CM} that $J_0$ is a compatible almost
complex structure on the closed and connected $2n$-dimensional symplectic manifold
$(M,\omega)$, and $Z_D \subset M$ is a nearly 
$J_0$-holomorphic Donaldson hypersurface of large degree~$D \in \NN$.
If $D$ is sufficiently large and $J \in \jJ_{\ell + 1}$ is
chosen appropriately (e.g.~it must be $C^0$-close to $J_0$ and match a
reference domain-independent structure $J_1$ near $Z_D$, whose restriction to
$Z_D$ is generic), then \cite{CieliebakMohnke:transversality} shows that 
the natural compactification $\overline{\mM}_{0,m+\ell}^A(M,J;Z_D)$ of
$\mM_{0,m+\ell}^A(M,J;Z_D)$ consists of strata
$\mM_T^{\{A_\alpha\}}(M, J  ; Z_D)$ modelled on weighted
$(m+\ell)$-labelled trees $(T,\{A_\alpha\})$ that are \defin{$\ell$-stable}, 
i.e.~they are stable even after removing the $m$ ordinary (but keeping the
$\ell$ extra) marked points.  Moreover, none of the nonconstant components of 
such nodal curves are 
contained in~$Z_D$.  The pseudocycle property for $(\ev,\Phi)$ is based on the
observation that on any stratum $\mM_T^{\{A_\alpha\}}(M, J  ; Z_D) \subset
\overline{\mM}_{0,m+\ell}^A(M,J;Z_D)$ for which $T$ has more than one vertex,
the restriction of $(\ev,\Phi)$ factors as a composition
\begin{equation}
\label{eqn:factorization}
\mM_T^{\{A_\alpha\}}(M,J ; Z_D) \to \mM_{T'}^{\{A_\alpha\}}(M,J ; Z_D) \to M^m \times
\overline{\mM}_{0,m},
\end{equation}
where the space in the middle is a smooth manifold that either has dimension
at most $\dim \mM_{0,m+\ell}^A(M,J ; Z_D) - 2$ or factors through another
manifold that does.  The reason we need this factorization instead of just
considering $\mM_T^{\{A_\alpha\}}(M,J ; Z_D)$ itself is that the latter
sometimes has artificially large dimension, due to the presence of multiple
extra marked points in the same constant component.  But since these extra
marked points play no role in defining the evaluation and forgetful map,
we can fix this problem by removing them, which leads to the factorization
above.  The remainder of this appendix will be concerned with
the definition and essential properties of 
$\mM_{T'}^{\{A_\alpha\}}(M,J ; Z_D)$.

As in \cite{CieliebakMohnke:transversality}, we will use the term
\defin{ghost tree} to mean a maximal subtree $T''$ of a
weighted tree $(T,\{A_\alpha\})$ with the property that $A_\alpha=0$
for all $\alpha \in T''$.  Similarly, a \defin{ghost bubble} on
a nodal $J$-holomorphic curve $[(\mathbf{z},\mathbf{u})] \in
\mM_{T}^{\{A_\alpha\}}(M,J;Z_D)$ is the constant holomorphic curve 
obtained by restricting $\mathbf{u}$ to any component 
$S_\alpha \subset \Sigma_{\mathbf{z}}$ with
$A_\alpha=0$.  We shall define the manifold $\mM_{T'}^{\{A_\alpha\}}(M,J;Z_D)$ 
roughly as the space of nodal curves that one obtains from
elements of $\mM_T^{\{A_\alpha\}}(M,J;Z_D)$ by forgetting all but one of
the extra marked
points on each ghost tree and stabilizing as necessary,
but keeping all other information, including the conformal structures on 
the ghost bubbles with their ordinary marked points.  This can be defined
more formally as follows.  
Suppose $\ell' \le \ell$ is the number of extra marked points on
vertices $\alpha \in T$ with $A_\alpha \ne 0$ plus the number of
ghost trees in $T$ that have at least one extra marked point.
Then we associate to $T$ a stable
$(m+\ell')$-labelled tree $T'$ via the following procedure:
\begin{enumerate}
\item On each ghost tree in $T$, keep all ordinary marked points and 
the first extra marked point (if any)
but remove all other extra marked points;
\item Stabilize by removing any vertices that now have
fewer than 3 special points and adjusting neighboring
edges accordingly.  (Note that since $T$ is stable, this step
can only affect vertices $\alpha \in T$ with $A_\alpha=0$.)
\end{enumerate}
By Lemma~\ref{lemma:coherence}, any 
coherent almost complex structure $J \in \jJ_{\ell+1}$ determines
for every nodal curve $\mathbf{z}$ modelled on $T$ a 
$\Sigma_{\mathbf{z}}$-dependent almost complex structure $J_{\mathbf{z}}$ 
whose restriction to each component $S_\alpha \subset \Sigma_{\mathbf{z}}$
depends only on the special points on~$S_\alpha$.  It follows that if
$\mathbf{z}$ is modelled on $T'$, then $J$ uniquely determines a domain
dependent almost complex structure on any component 
$S_\alpha \subset \Sigma_{\mathbf{z}}$ with $A_\alpha \ne 0$
(cf.~the discussion preceding Corollary~5.9 in 
\cite{CieliebakMohnke:transversality}).  We can extend this to 
a $\Sigma_{\mathbf{z}}$-dependent almost complex structure
$$
J_{\mathbf{z}} \in \jJ_{T'}
$$
by setting $J_{\mathbf{z}}|_{S_\alpha}$ for each $\alpha \in T'$ with
$A_\alpha=0$ to match the fixed domain-independent reference almost complex
structure~$J_1$.  In this way, we can 
speak of \emph{nodal $J$-holomorphic maps}
$(\mathbf{z},\mathbf{u})$ modelled on the weighted $(m+\ell')$-labelled
tree $(T',\{A_\alpha\})$; note that the definition of $J_{\mathbf{z}}$
on components $S_\alpha$ with $A_\alpha=0$ plays no role here since 
$\mathbf{u}$ is necessarily constant on such components.
Denote by $\widetilde{\mM}_{T'}^{\{A_\alpha\}}(M,J;Z_D)$ the
space of such maps for which the $\ell'$ extra marked points are all
mapped into~$Z_D$, and denote its quotient by the group of biholomorphic
isomorphisms by
$$
\mM_{T'}^{\{A_\alpha\}}(M,J;Z_D) := \widetilde{\mM}_{T'}^{\{A_\alpha\}}(M,J;Z_D) / \sim.
$$
There is a natural projection
$$
\mM_{T}^{\{A_\alpha\}}(M,J;Z_D) \to \mM_{T'}^{\{A_\alpha\}}(M,J;Z_D)
$$
defined by forgetting $\ell - \ell'$ of the extra marked points and then
collapsing constant components as necessary in order to stabilize the domain.
Since all the ordinary marked points are retained in this process, the
factorization \eqref{eqn:factorization} of $(\ev,\Phi)$ is well defined.
The pseudocycle property now \emph{mostly} follows from the following
lemma, whose proof is exactly the same as
\cite{CieliebakMohnke:transversality}*{Lemma~5.6, Prop.~5.7 and Cor.~5.8}.

\begin{lemma}
\label{lemma:codimension2}
For generic $J$, if $e(T')$ denotes the number of edges in the tree~$T'$, then 
$\mM_{T'}^{\{A_\alpha\}}(M,J;Z_D)$ is a smooth manifold with
\begin{equation*}
\begin{split}
\dim \mM_{T'}^{\{A_\alpha\}}(M,J;Z_D) &= 2(n-3) + 2 c_1(A) + 2m - 2 e(T') \\
&= \dim \mM_{0,m+\ell}^A(M,J;Z_D) - 2 e(T'). 
\end{split}
\end{equation*}  \qed
\end{lemma}

We must still deal with the possibility
that $T$ has
more than one vertex but $T'$ has only one,
in which case $\mM_{T'}^{\{A_\alpha\}}(M,J;Z_D)$ can be regarded as a space of
smooth (non-nodal) 
curves $\mM_{0,m+\ell'}^A(M,J;Z_D)$ constrained to send their $\ell'$
extra marked points into~$Z_D$.\footnote{Since technically $J$ belongs to
$\jJ_{\ell+1}$ and not $\jJ_{\ell'+1}$, the definition of $J$-holomorphicity
for curves in $\mM_{0,m+\ell'}^A(M,J;Z_D)$ is a bit subtle and must be understood
in the same sense as the preceding discussion of $\mM_{T'}^{\{A_\alpha\}}(M,J;Z_D)$.}
This space
has dimension equal to that of $\mM_{0,m+\ell}^A(M,J;Z_D)$, but
we claim that for generic $J$, if $T$ has more than one vertex,
then curves in $\mM_{0,m+\ell'}^A(M,J;Z_D)$ that
arise in this way from elements of $\mM_{T}^{\{A_\alpha\}}(M,J;Z_D)$
lie in a subset of codimension at least~$2$.  The crucial point here is 
that such a curve will never belong to the open subset
$$
\mM_{0,m+\ell'}^{A,*}(M,J;Z_D) \subset \mM_{0,m+\ell'}^A(M,J;Z_D)
$$
consisting of curves whose intersections with $Z_D$ at the $\ell'$ extra
marked points are all transverse, and for generic $J$, 
\cite{CieliebakMohnke:transversality}*{\S 6} shows that the complement of
this subset is a finite union of smooth submanifolds having
dimension at most $\dim \mM_{0,m+\ell'}^A(M,J;Z_D) - 2$.
To see that curves in $\mM_{0,m+\ell'}^{A,*}(M,J;Z_D)$ are excluded,
observe that the curves in question arise precisely in situations
where removing the relevant extra marked points from
ghost bubbles in $T$ makes all of them unstable---in particular, 
$(T,\{A_\alpha\})$ must in this case consist of the following:
\begin{itemize}
\item A unique vertex $\alpha_0$ that has all $m$ of the ordinary marked points
and $A_{\alpha_0} = A \ne 0$;
\item One or more ghost trees that each have no ordinary marked points but 
at least two of the extra marked points.
\end{itemize}
The resulting curve in $\mM_{0,m+\ell'}^A(M,J;Z_D)$ is not contained in~$Z_D$ but
has $\ell'$ marked points at which it must intersect~$Z_D$, and if
all of these $\ell'$ intersections are transverse, then the fact that
$A \cdot [Z_D] = \ell > \ell'$ implies there must be additional intersections
separate from the extra marked points. But since these curves are assumed
to arise from objects in the closure of $\mM_{0,m+\ell}^A(M,J;Z_D)$, the latter
implies (via positivity of intersections) 
the existence of curves in $\mM_{0,m+\ell}^A(M,J;Z_D)$ that have
intersections with $Z_D$ outside their extra marked points, and that is
impossible.  This proves:

\begin{lemma}
\label{lemma:incidence}
For generic $J$, 
if $T$ has more than one vertex and $T'$ has only one, then
the restriction of $(\ev,\Phi)$ to $\mM_T^{\{A_\alpha\}}(M,J;Z_D)$ factors as
\begin{equation*}
\begin{split}
\mM_T^{\{A_\alpha\}}(M,J;Z_D) &\to \mM_{0,m+\ell'}^A(M,J;Z_D) \setminus
\mM^{A,*}_{0,m+\ell'}(M,J;Z_D) \\
& \to M^m \times \overline{\mM}_{0,m},
\end{split}
\end{equation*}
where the space in the middle is a finite union of manifolds having
dimension at most $\dim \mM_{0,m+\ell}^A(M,J;Z_D) - 2$.  \qed
\end{lemma}

It follows from Lemmas~\ref{lemma:codimension2} and~\ref{lemma:incidence}
that for generic~$J$, $(\ev,\Phi)$ is a pseudocycle as claimed.
Using these same factorizations, one can similarly adapt the proof of
\cite{CieliebakMohnke:transversality}*{Theorem~1.3} to show that the
rational pseudocycle defined by $\frac{1}{\ell!} (\ev,\Phi)$ is independent
of the choices $(J_0,Z_D,J)$ up to rational cobordism.
This involves defining corresponding moduli spaces for $1$-parameter families
of data, as well as moduli spaces of curves with two sets of extra marked points
constrained by two Donaldson hypersurfaces of differing degrees---the idea in
each case is to factor $(\ev,\Phi)$ as above through moduli spaces in which each 
ghost tree carries at most one extra marked point.  Such moduli spaces always
have small enough dimension to establish the pseudocycle condition.

\begin{bibdiv}
\begin{biblist}
\bibselect{wendlc}
\end{biblist}
\end{bibdiv}

\end{document}